\theoremstyle{plain}
\newtheorem{conjecture}{Conjecture}
\newtheorem{lemma}{Lemma}
\newtheorem{theorem}{Theorem}
\numberwithin{equation}{section}
\begin{document}
\title[The floating body and the hyperplane conjecture]{Comments on the floating body and the hyperplane conjecture}
\author{Daniel Fresen}
\address{Department of Mathematics, University of Missouri}
\email{djfb6b@mail.missouri.edu}
\subjclass[2000]{Primary 52A23, 52A20; Secondary 52A21, 52A38}
\keywords{hyperplane conjecture, floating body, log-concave, quantile%
}
\dedicatory{To my dear sister Anna}
\thanks{Many thanks to Elizabeth Meckes, Mark Meckes, Mathieu Meyer and Elisabeth Werner for helpful comments, to John Fresen and Nigel Kalton for their advice and encouragement, and to my advisors Alexander Koldobsky and Mark Rudelson.}
\date{February 2011}

\begin{abstract}
We provide a reformulation of the hyperplane conjecture (the slicing problem) in terms of the floating body and give upper and lower bounds on the logarithmic Hausdorff distance between an arbitrary convex body $K\subset \mathbb{R}^{d}$\ and the convex
floating body $K_{\delta }$ inside $K$.
\end{abstract}

\maketitle

\section{Introduction}

Let $d\in \mathbb{N}$ and let $K\subset \mathbb{R}^{d}$ be a convex body (a
compact convex set with non-empty interior). \ We shall index half-spaces as 
$\mathfrak{H}_{\theta ,t}=\{x\in \mathbb{R}^{d}:\left\langle x,\theta
\right\rangle \geq t\}$ where $\theta \in S^{d-1}$ and $t\in \mathbb{R}$.
For any $\delta >0$, the \textit{convex floating body} inside $K$ is defined
as%
\[
K_{\delta }=\cap \{\mathfrak{H}_{\theta ,t}:\mathrm{vol}_{d}(\mathfrak{H}%
_{\theta ,t}\cap K)\geq (1-\delta )\mathrm{vol}_{d}(K)\}
\]%
\ We shall refer to $K_{\delta }$ simply as the floating body, although this
terminology is sometimes used for a non-convex variation of $K_{\delta }$. \
The convex floating body was introduced by Sch\"{u}tt and Werner \cite{SW}
and is a well studied object in convex geometry;\ it is related to
Gauss-Kronecker curvature, affine surface area and polyhedral approximation (%
\cite{SW}, \cite{Sc1}, \cite{Sc2}) and plays an important role in the study
of random polytopes (see e.g. \cite{BL} and \cite{Vu} p. 1290). The
definition of the convex floating body can be extended to an arbitrary
probability measure on $\mathbb{R}^{d}$ in the obvious way (see \cite{Fr})
and is a very natural multivariate version of a quantile. In this paper we
discuss a relationship between the floating body and the hyperplane
conjecture.

The hyperplane conjecture (also known as the slicing problem) speculates
that there exists a universal constant $c>0$ such that for any $d\in \mathbb{%
N}$ and any convex body $K\subset \mathbb{R}^{d}$ of unit volume, there
exists a hyperplane $\mathcal{H}\subset \mathbb{R}^{d}$ such that $\mathrm{%
vol}_{d-1}(\mathcal{H}\cap K)\geq c$. \ This conjecture goes back to the
1986 paper of Bourgain \cite{Bo0} (see the remark on p.1470) and is one of
the most fundamental unsolved problems in asymptotic convex geometry.\ \ It
is equivalent to several other open problems in the area (see \cite{MP}).
One such open problem is a variation of the Busemann-Petty problem \cite{GKS}%
:

\begin{conjecture}
There exists a universal constant $c>0$ with the following property: if $%
K,L\subset \mathbb{R}^{d}$ denote any centrally symmetric convex bodies in
any dimension $d$ such that for all central hyperplanes $H$ we have%
\[
\mathrm{vol}_{d-1}(K\cap H)\leq \mathrm{vol}_{d-1}(L\cap H)
\]%
then it follows that%
\[
\mathrm{vol}_{d}(K)\leq c\cdot \mathrm{vol}_{d}(L)
\]
\end{conjecture}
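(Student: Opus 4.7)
The plan is to establish the conjecture by proving its equivalence with the hyperplane conjecture itself, as the surrounding discussion indicates. I would attack each direction separately.

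\textbf{Variant implies hyperplane conjecture.} This is the clean direction. Given a centrally symmetric $K \subset \mathbb{R}^d$ with $\mathrm{vol}_d(K) = 1$, set $M = \max_H \mathrm{vol}_{d-1}(K \cap H)$, choose $r$ so that $\omega_{d-1} r^{d-1} = M$, and let $L = r B_2^d$. Every central section of $L$ has $(d-1)$-volume exactly $\omega_{d-1} r^{d-1} = M$, hence dominates every central section of $K$; the variant then yields $1 = \mathrm{vol}_d(K) \leq c \cdot \omega_d r^d$, and Stirling's formula rearranges this into a uniform lower bound $M \geq c' > 0$, which is the hyperplane conjecture.

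\textbf{Hyperplane conjecture implies the variant.} This is the serious direction. Rescale so that $\mathrm{vol}_d(L) = 1$ and jointly rotate $K$ and $L$ so that $L$ sits in isotropic position; only common rotations and scalings preserve the sectional hypothesis, so this joint move is legitimate. By the Hensley/Milman--Pajor inequality, all central sections of $L$ are then comparable to $1/L_L$, and the hyperplane conjecture bounds $L_L$ from above by a universal constant. The hypothesis transfers this as a uniform upper bound $\mathrm{vol}_{d-1}(K \cap H) \leq C$ for every central $H$, and one finishes by combining this sectional bound with the hyperplane conjecture applied to $K$ itself to convert sectional information back into volumetric information.

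\textbf{Main obstacle.} The hard work lies entirely in the second direction. The core difficulty is that no linear normalization simultaneously places both $K$ and $L$ into isotropic position without destroying the sectional hypothesis, so the Hensley-type estimate cannot be applied symmetrically. Any successful argument must leverage the hyperplane conjecture separately on each body and then stitch the sectional data together through a common frame, and this stitching is exactly the step where one encounters the fundamental obstruction keeping the slicing problem open. I would expect shortcuts via direct slicing and Brunn's principle to stall here, since sectional bounds alone do not control the aspect ratio of $K$.
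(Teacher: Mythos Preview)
The paper does not prove this statement: it is recorded as a \emph{conjecture}, and the equivalence with the hyperplane conjecture is asserted with a citation to Milman--Pajor rather than argued in the text. So there is no in-paper proof to compare against; I can only assess your outline on its own terms.

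Your first direction is fine. In the second direction, your self-identified ``main obstacle'' rests on a misconception. You write that ``only common rotations and scalings preserve the sectional hypothesis,'' and therefore you cannot put $L$ into isotropic position. In fact any common invertible linear map $T$ preserves the hypothesis: for a central hyperplane $H=\theta^{\perp}$ one has $TK\cap H=T\bigl(K\cap (T^{\ast}\theta)^{\perp}\bigr)$, and the $(d-1)$-dimensional Jacobian of $T$ restricted to $(T^{\ast}\theta)^{\perp}$ depends only on $T$ and $\theta$, not on the body. Hence the inequality $\mathrm{vol}_{d-1}(K\cap H')\le \mathrm{vol}_{d-1}(L\cap H')$ for all central $H'$ transfers verbatim to $TK$ and $TL$. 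You may therefore apply to both bodies the linear map that puts $L$ in isotropic position with $\mathrm{vol}_d(L)=1$.

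Once that is granted, your own sketch completes: Hensley gives $\mathrm{vol}_{d-1}(L\cap H)\le C/L_L\le C'$ for every central $H$ (using the hyperplane conjecture for $L$), hence $\mathrm{vol}_{d-1}(K\cap H)\le C'$; rescaling $K$ to unit volume by $V^{-1/d}$ with $V=\mathrm{vol}_d(K)$ and applying the hyperplane conjecture to the rescaled body gives a central section of size at least $c_0$, so $c_0\le C' V^{-(d-1)/d}$ and $V\le (C'/c_0)^{d/(d-1)}\le (C'/c_0)^{2}$. The ``stitching'' difficulty you anticipate does not arise, because you never need $K$ and $L$ to be simultaneously isotropic; isotropizing $L$ alone suffices.
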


Let $X$ be a random vector uniformly distributed inside a convex body $K$. \
We say that $K$ is isotropic\footnote{%
Note that the word isotropic is used to mean several different things in the
literature. \ Some authors require $L_{K}=1$, others require $\mathrm{vol}%
_{d}(K)=1$. \ Our use of the word does not include either of these
conditions.} if its centroid lies at the origin and the covariance of $X$
obeys%
\[
\mathrm{cov}(X)=L_{K}^{2}I_{d}
\]%
where \thinspace $L_{K}>0$ is called the isotropic constant of $K$ and $%
I_{d} $ is the $d\times d$ identity matrix. \ Any convex body can be brought
to isotropic position via an affine map (see e.g. \cite{Ba0} or \cite{MP}).
It is well known that the hyperplane conjecture holds if and only if there
exists a universal constant $c>0$ such that $L_{K}<c$ for any isotropic
convex body of unit volume in any dimension. The best bound to date is $%
L_{K}<cd^{1/4}$ by Klartag \cite{Kl2} (see also \cite{Bo}). Let $%
B(0,r)=\{x\in \mathbb{R}^{d}:||x||_{2}\leq r\}$ and consider the following
two statements:

\begin{itemize}
\item $\Lambda _{1}$: there exists $\delta \in (0,e^{-1})$ and $r>0$ such
that for all $d\in \mathbb{N}$ and all isotropic convex bodies $K\subset 
\mathbb{R}^{d}$ of unit volume, $K_{\delta }\subset B(0,r)$.

\item $\Lambda _{2}$: for all $\delta \in (0,e^{-1})$ there exists $r>0$
such that for all $d\in \mathbb{N}$ and all isotropic convex bodies $%
K\subset \mathbb{R}^{d}$ of unit volume, $K_{\delta }\subset B(0,r)$.
\end{itemize}

\begin{theorem}
The hyperplane conjecture is equivalent to both $\Lambda _{1}$ and $\Lambda
_{2}$.
\end{theorem}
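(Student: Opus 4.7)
The plan is to prove the chain hyperplane conjecture $\Rightarrow \Lambda_2 \Rightarrow \Lambda_1 \Rightarrow$ hyperplane conjecture; the middle implication is immediate from the definitions. Both nontrivial implications will be reduced to a one-dimensional statement about log-concave marginals. Let $X$ be uniformly distributed on $K$ and, for $\theta \in S^{d-1}$, set $Y_\theta = \langle X, \theta \rangle$ with CDF $F_\theta$ and density $g_\theta$; for isotropic $K$ of unit volume we have $\mathbb{E}Y_\theta = 0$, $\mathrm{var}(Y_\theta) = L_K^2$, and $g_\theta$ is log-concave by Pr\'ekopa--Leindler. Unpacking the definition of the floating body (and replacing $\theta$ by $-\theta$) yields the equivalent description
\[
K_\delta = \bigl\{x \in \mathbb{R}^d : \langle x, \theta \rangle \leq F_\theta^{-1}(1-\delta) \text{ for all } \theta \in S^{d-1}\bigr\}.
\]
For $\delta < 1/e$, Gr\"unbaum's inequality implies $0 \in K_\delta$, so $K_\delta$ has non-empty interior and one can verify the support function identity $h_{K_\delta}(\theta) = F_\theta^{-1}(1-\delta)$. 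Consequently $K_\delta \subseteq B(0,r)$ is equivalent to the tail condition $P(Y_\theta > r) \leq \delta$, uniformly in $\theta \in S^{d-1}$.

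For hyperplane conjecture $\Rightarrow \Lambda_2$: assuming $L_K \leq c_0$ universally for isotropic unit-volume $K$, fix $\delta \in (0, e^{-1})$. Borell's inequality, applied to the mean-zero log-concave random variable $Y_\theta$ of standard deviation $\leq c_0$, yields sub-exponential tails and hence a dimension-free quantile bound $F_\theta^{-1}(1-\delta) \leq C(\delta) c_0$. By the equivalence above this is exactly $\Lambda_2$.

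For $\Lambda_1 \Rightarrow$ hyperplane conjecture: applying the tail reformulation to both $\theta$ and $-\theta$ gives $P(|Y_\theta| \leq r) \geq 1 - 2\delta_0 > 1 - 2/e > 0$. Combining this with the trivial bound $P(|Y_\theta| \leq r) \leq 2r \|g_\theta\|_\infty$ and the classical one-dimensional inequality $\|g\|_\infty \cdot \sigma(g) \leq 1$ for log-concave densities (which yields $\|g_\theta\|_\infty \leq 1/L_K$) produces the dimension-free bound
\[
L_K \leq \frac{2r}{1 - 2\delta_0},
\]
which is the hyperplane conjecture.

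The main obstacle is mostly conceptual: recognizing that the entire argument reduces to two standard one-dimensional facts about log-concave densities --- Borell's sub-exponential tails in one direction, and the reverse Fradelizi-type bound $\|g\|_\infty\,\sigma \leq 1$ in the other --- once one establishes the correspondence between $K_\delta$ and the marginal quantiles $F_\theta^{-1}(1-\delta)$. The only genuinely technical point is justifying the support function identity $h_{K_\delta}(\theta) = F_\theta^{-1}(1-\delta)$, which is where the assumption $\delta < 1/e$ is used: it guarantees (via Gr\"unbaum) that $K_\delta$ is non-degenerate, so that the defining half-spaces are tight at $\partial K_\delta$.
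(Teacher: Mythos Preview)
Your overall strategy matches the paper's exactly: the same chain of implications, the same reduction to one-dimensional log-concave marginals, and the same use of sub-exponential tails for $\Lambda_{\mathcal H}\Rightarrow\Lambda_2$. The difference lies in the implication $\Lambda_1\Rightarrow\Lambda_{\mathcal H}$, and there your argument has a gap you have correctly located but not closed.

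You deduce from $K_\delta\subset B(0,r)$ that $F_\theta^{-1}(1-\delta)\le r$ for every $\theta$, i.e.\ $P(|Y_\theta|\le r)\ge 1-2\delta$; this uses the \emph{equality} $h_{K_\delta}(\theta)=F_\theta^{-1}(1-\delta)$. From the definition of $K_\delta$ one only gets the inequality $h_{K_\delta}(\theta)\le F_\theta^{-1}(1-\delta)$, and the reverse inequality is not a consequence of $K_\delta$ having nonempty interior (your Gr\"unbaum remark). Non-degeneracy of an intersection of half-spaces does not force every defining half-space to be supporting. The identity you want is in fact true---it is the Meyer--Reisner theorem that the flotation surface is convex, so each hyperplane cutting off volume $\delta$ touches $K_\delta$ at the centroid of the section---but this is a nontrivial result that should be cited, not asserted.

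The paper avoids this issue entirely. Instead of extracting the quantile from $K_\delta\subset B(0,r)$, it uses the one-dimensional lower bound $F_\theta^{-1}(1-\delta)\ge (e^{-1}-\delta)L_K$ (your density argument reversed, essentially) to show directly that $B(0,(e^{-1}-\delta')L_K)\subset K_\delta$; combining this inner ball with the hypothesis $K_\delta\subset B(0,r)$ yields $L_K\le r(e^{-1}-\delta')^{-1}$ with no appeal to the support-function identity. Your route via $\|g_\theta\|_\infty L_K\le 1$ is a pleasant alternative once the identity is granted, but the paper's argument is more self-contained.
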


The proof of theorem 1 essentially comes down to an observation that due to
the rigidity of one dimensional log-concave probability distributions, the
variance of such a distribution is controlled by certain quantiles (and vice
versa). Thus the condition of bounded variance ($L_{K}<c$) equates to a
bound on the quantiles, and this easily transfers to the statements $\Lambda
_{1}$ and $\Lambda _{2}$ concerning the floating body.

We define the \textit{logarithmic Hausdorff distance} between two convex
bodies $K,L\subset \mathbb{R}^{d}$ as%
\[
d_{\mathfrak{L}}(K,L)=\inf \{\lambda \geq 1:\exists x\in int(K\cap L)\text{, 
}\lambda ^{-1}(K-x)+x\subset L\subset \lambda (K-x)+x\}
\]%
The logarithmic Hausdorff distance compares both the size and the shape of
the two bodies and is related to both the Hausdorff distance $d_{\mathcal{H}%
} $ and the Banach-Mazur distance $d_{BM}$. \ Unlike the Hausdorff distance,
it is invariant under affine transformations that act simultaneously on both
bodies, yet it is stronger than the Banach-Mazur distance which is blind to
affine transformations that act on one body but not the other. We end the
paper with the following two theorems. The bound (\ref{geometric}) is used
in \cite{Fr}.

\begin{theorem}
For all $d\in \mathbb{N}$, all convex bodies $K\subset \mathbb{R}^{d}$ and
all $\delta \leq 8^{-d}$, the body $K_{\delta }$ is non-empty and we have
the inequalities%
\begin{eqnarray}
d_{\mathfrak{L}}(K,K_{\delta }) &\leq &1+8\delta ^{1/d}  \label{geometric} \\
d_{BM}(K,K_{\delta }) &\leq &1+24\delta ^{1/d}  \label{Banach-Mazur}
\end{eqnarray}
\end{theorem}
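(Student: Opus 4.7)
The plan is to take $x_0$ to be the centroid of $K$ and $\lambda = 1 + 8\delta^{1/d}$, and to show $K' := x_0 + \lambda^{-1}(K - x_0) \subset K_\delta$. This would give non-emptiness of $K_\delta$ (as $x_0 \in K'$) together with the non-trivial half of (\ref{geometric}); the reverse inclusion $K_\delta \subset \lambda(K-x_0) + x_0$ is automatic, since $K_\delta \subset K$ and $K \subset \lambda(K-x_0)+x_0$ whenever $\lambda \geq 1$ and $x_0 \in K$.

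To show $K' \subset K_\delta$, I would argue by contradiction. Suppose some half-space $\mathfrak{H} = \mathfrak{H}_{\theta,t}$ with $\mathrm{vol}_d(K \cap \mathfrak{H}) \geq (1-\delta)\mathrm{vol}_d(K)$ fails to contain all of $K'$. Let $y^* \in K'$ and $z^* \in K$ be the minimizers of $\langle \cdot, \theta\rangle$ over $K'$ and $K$ respectively, so that $y^* = (1-\lambda^{-1})x_0 + \lambda^{-1}z^*$ satisfies $\langle y^*, \theta\rangle < t$. The crucial construction is the affine map $\phi(w) = \lambda^{-1}z^* + (1-\lambda^{-1})w$ and its image $A := \phi(K)$. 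Since each point of $A$ is a convex combination of $z^*$ and some $w \in K$, one has $A \subset K$; moreover $\mathrm{vol}_d(A) = (1-\lambda^{-1})^d\mathrm{vol}_d(K)$, and because affine maps preserve centroids, $y^* = \phi(x_0)$ is the centroid of $A$. As $\langle y^*, \theta\rangle < t$, Gr\"unbaum's inequality applied to $A$ yields
\[
\mathrm{vol}_d(A \setminus \mathfrak{H}) \geq \left(\frac{d}{d+1}\right)^d \mathrm{vol}_d(A) > e^{-1}(1-\lambda^{-1})^d \mathrm{vol}_d(K).
\]
Since $A \setminus \mathfrak{H} \subset K \setminus \mathfrak{H}$ and $\mathrm{vol}_d(K \setminus \mathfrak{H}) \leq \delta\mathrm{vol}_d(K)$, this forces $1 - \lambda^{-1} < (e\delta)^{1/d}$. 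But $\delta \leq 8^{-d}$ gives $8\delta^{1/d} \leq 1$, whence $1 - \lambda^{-1} = 8\delta^{1/d}/(1+8\delta^{1/d}) \geq 4\delta^{1/d}$, and since $e^{1/d} \leq e < 4$ one has $4\delta^{1/d} > (e\delta)^{1/d}$, the desired contradiction.

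The bound (\ref{Banach-Mazur}) then comes almost for free: the affine map $T(y) = x_0 + \lambda(y - x_0)$ realizes $K \subset T(K_\delta) \subset T(K) = \lambda(K - x_0) + x_0$, so $K$ is sandwiched between $T(K_\delta)$ and a $\lambda$-homothet of itself about $x_0$, giving $d_{BM}(K, K_\delta) \leq \lambda \leq 1 + 24\delta^{1/d}$; the extra slack in the constant accommodates a squaring step should the operative definition of Banach--Mazur distance forbid translations, as $(1 + 8\delta^{1/d})^2 \leq 1 + 24\delta^{1/d}$ whenever $\delta^{1/d} \leq 1/8$. The main obstacle throughout is recognizing that the extremal point $y^*$ of $K'$ in direction $-\theta$ happens to be the centroid of a scaled-down affine copy $A \subset K$ of $K$; once this is in place, Gr\"unbaum's inequality converts the geometric containment into a volume comparison, after which only an elementary numerical check remains.
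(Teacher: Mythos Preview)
Your proof is correct, but it follows a genuinely different route from the paper's. The paper places the centroid at the origin and works analytically with the normalized section function $\psi_\theta$: from Brunn's theorem it derives two competing bounds on the tail $A_\theta(t)$ (one comparing the cap to a cone on $\psi_\theta(t)$, the other bounding the mass between the median and $t$), combines them to obtain the explicit inequality
\[
A_\theta(t)\;\geq\;\tfrac12\Bigl(\tfrac{h_K(\theta)-t}{h_K(\theta)-m_\theta}\Bigr)^{d},
\]
and from this reads off $(1-4\delta^{1/d})K\subset K_\delta$. Your argument instead packages the Brunn-based computation into Gr\"unbaum's inequality: the key observation---that the extreme point $y^*$ of the shrunk body $K'$ in direction $-\theta$ is the centroid of a homothetic copy $A\subset K$---lets you convert the putative small cap directly into a volume lower bound without ever touching the section function. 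The paper's approach yields the explicit quantitative tail estimate above as a byproduct, which may be useful elsewhere; your approach is shorter, more geometric, and makes the role of the centroid transparent. Your handling of the Banach--Mazur bound via the squaring step $d_{BM}\le d_{\mathfrak{L}}^2$ matches what the paper's framework implies.
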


By (\ref{simplex bound}), inequality (\ref{geometric}) is sharp (except for
the constant $8$) when $K$ is a simplex.

\begin{theorem}
There exists a universal constant $c>0$ such that for all $d\in \mathbb{N}$,
all convex bodies $K\subset \mathbb{R}^{d}$ and all $\delta \in (0,e^{-1})$,
the body $K_{\delta }$ is non-empty and 
\begin{equation}
d_{\mathfrak{L}}(K,K_{\delta })\geq \frac{cd^{\frac{1}{4}}}{\log (2\delta
^{-1})}  \label{lower bound 2}
\end{equation}
\end{theorem}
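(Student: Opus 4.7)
The plan is to reduce to $K$ being in isotropic position with unit volume, trap $K_\delta$ inside a Euclidean ball of radius comparable to $d^{1/4}\log(2\delta^{-1})$, and observe that $K$ itself is too large to fit, after contraction by $\lambda^{-1}$, inside any such ball. The reduction exploits the affine invariance of $d_{\mathfrak{L}}$ together with the equivariance $(TK)_{\delta}=T(K_\delta)$ valid for every invertible affine map $T$, and the non-emptiness of $K_\delta$ for $\delta\in(0,e^{-1})$ is settled by Gr\"unbaum's inequality, which places the centroid of $K$ inside $K_{1/e}$.

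The key analytic estimate bounds the support function of $K_\delta$. In any direction $\theta\in S^{d-1}$, this support function equals the $(1-\delta)$-quantile of the one-dimensional log-concave random variable $\langle X,\theta\rangle$, where $X$ is uniform in $K$ and the variance is $L_K^2$. Borell's lemma shows that any such quantile is at most $CL_K\log(2\delta^{-1})$, and combined with Klartag's bound $L_K\leq C'd^{1/4}$ this gives
\[
K_\delta\subset B(0,R),\qquad R:=C''d^{1/4}\log(2\delta^{-1}).
\]
For any $\lambda$ larger than $d_{\mathfrak{L}}(K,K_\delta)$, the definition furnishes $x\in\mathrm{int}(K_\delta)$ with $\lambda^{-1}(K-x)+x\subset K_\delta\subset B(0,R)$. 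A short rearrangement then yields $K\subset B((1-\lambda)x,\lambda R)$, so any ball containing $K$ has radius at least $c\sqrt{d}$ by the isodiametric inequality applied to the unit-volume body $K$. Consequently $\lambda R\geq c\sqrt{d}$, which gives
\[
\lambda\geq\frac{c\sqrt{d}}{R}\geq\frac{c'''d^{1/4}}{\log(2\delta^{-1})},
\]
and taking the infimum over $\lambda$ proves the theorem.

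The principal obstacle is the invocation of Klartag's deep bound $L_K\leq Cd^{1/4}$ on the isotropic constant; this is where the $d^{1/4}$ factor enters. Every remaining step is elementary, relying only on Borell's lemma for the log-concave quantile estimate, an algebraic manipulation of inclusions, and the isodiametric inequality.
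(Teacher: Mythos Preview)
Your proposal is correct and follows essentially the same route as the paper: place $K$ in isotropic position of unit volume, use the log-concave tail bound together with Klartag's estimate $L_K\le Cd^{1/4}$ to trap $K_\delta$ in a ball of radius $\asymp d^{1/4}\log(2\delta^{-1})$, and then compare with the fact that any ball containing a unit-volume body has radius at least $c\sqrt{d}$. Two cosmetic remarks: the support function of $K_\delta$ is only \emph{bounded above} by the $(1-\delta)$-quantile (equality need not hold, but the inequality is all you use), and plain volume monotonicity already gives the $c\sqrt d$ circumradius bound---the isodiametric inequality is not needed.
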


\bigskip

\section{Preliminaries}

For any convex body $K\subset \mathbb{R}^{d}$ containing the origin as an
interior point, we consider the dual Minkowski functional $||\cdot
||_{K^{\circ }}$ defined by $||y||_{K^{\circ }}=\sup \{\left\langle
y,x\right\rangle :x\in K\}$. \ Let $h_{K}:S^{d-1}\rightarrow \mathbb{R}$
denote the support function of $K$, which is the restriction of $||\cdot
||_{K^{\circ }}$ to $S^{d-1}$. For any $\theta \in S^{d-1}$ and $t\in 
\mathbb{R}$, let $\mathcal{H}_{\theta ,t}=\{y\in \mathbb{R}^{d}:\left\langle
\theta ,y\right\rangle =t\}$.\ Let $\psi _{K,\theta }$ denote the normalized
parallel section function of $K$ in the direction of $\theta $,%
\[
\psi _{K,\theta }(t)=\frac{\mathrm{vol}_{d-1}(K\cap \mathcal{H}_{\theta ,t})%
}{\mathrm{vol}_{d}(K)}
\]%
and define%
\[
A_{K,\theta }(t)=\int_{t}^{h_{K}(\theta )}\psi _{K,\theta }(s)ds
\]%
where in both formulae $t\in \lbrack -h_{K}(-\theta ),h_{K}(\theta )]$.\ If $%
X$ is a random vector uniformly distributed in $K$, then $\psi _{K,\theta }$
is the density function of $\left\langle X,\theta \right\rangle $. We denote
the median of this random variable by%
\[
m_{K,\theta }=A_{K,\theta }^{-1}(1/2)
\]%
The \textit{Banach-Mazur distance }between convex bodies $K$ and $L$ shall
be denoted by $d_{BM}(K,L)$ and is defined as,%
\[
d_{BM}(K,L)=\inf \{\lambda \geq 1:\exists x\in \mathbb{R}^{d},\exists
T,K\subset TL\subset \lambda (K-x)+x\}
\]%
where $T$ represents an affine transformation of $\mathbb{R}^{d}$. By
convexity we can express the Hausdorff distance between $K$ and $L$ as%
\[
d_{\mathcal{H}}(K,L)=\sup_{\theta \in S^{d-1}}\left\vert \sup_{x\in
K}\left\langle \theta ,x\right\rangle -\sup_{x\in L}\left\langle \theta
,x\right\rangle \right\vert
\]%
We define the \textit{logarithmic Hausdorff distance} between $K$ and $L$
about a point $x\in int(K\cap L)$ as%
\[
d_{\mathfrak{L}}(K,L,x)=\inf \{\lambda \geq 1:\lambda ^{-1}(K-x)+x\subset
L\subset \lambda (K-x)+x\}
\]%
and%
\[
d_{\mathfrak{L}}(K,L)=\inf \{d_{\mathfrak{L}}(K,L,x):x\in int(K\cap L)\}
\]%
Note that%
\[
\log d_{\mathfrak{L}}(K,L,0)=\sup_{\theta \in S^{d-1}}\left\vert \log
||\theta ||_{K}-\log ||\theta ||_{L}\right\vert
\]%
The following relations follow directly from the definitions above,%
\begin{eqnarray*}
d_{\mathfrak{L}}(K,L,0) &=&d_{\mathfrak{L}}(K^{\circ },L^{\circ },0) \\
d_{BM}(K,L) &\leq &d_{\mathfrak{L}}(K,L)^{2}
\end{eqnarray*}

\bigskip

\section{The hyperplane conjecture}

A function $f:\mathbb{R}^{d}\rightarrow \lbrack 0,\infty )$ is log-concave
(see \cite{KM} and \cite{LV}) if for any $x,y\in \mathbb{R}^{d}$ and any $%
\lambda \in (0,1)$ we have%
\[
f(\lambda x+(1-\lambda )y)\geq f(x)^{\lambda }f(y)^{1-\lambda }
\]%
The support of such a function will necessarily be convex, and $-\log f$ is
a convex function. A probability measure is log-concave if for any compact
sets $\Omega _{1},\Omega _{2}\subset \mathbb{R}^{d}$ and any $\lambda \in
(0,1)$ the following inequality holds,%
\[
\mu (\lambda \Omega _{1}+(1-\lambda )\Omega _{2})\geq \mu (\Omega
_{1})^{\lambda }\mu (\Omega _{2})^{1-\lambda }
\]%
where $\Omega _{1}+\Omega _{2}=\{x+y:x\in \Omega _{1},y\in \Omega _{2}\}$ is
the Minkowski sum of $\Omega _{1}$ and $\Omega _{2}$. The Brunn-Minkowski
inequality (in its multiplicative form, see \cite{Ba}) is the statement that
Lebesgue measure is log-concave, while this was generalized in 1975 by
Borell \cite{Bor} who proved that an absolutely continuous probability
measure is log-concave if and only if its density is log-concave. Perhaps
the most important log-concave functions are the indicator functions of
convex bodies.\ Log-concave functions are not only a functional
generalization of convex bodies, they are of paramount importance in the
study of convex bodies. An excellent example of this is Klartag's proof of
the central limit theorem for convex bodies \cite{Kl}. One of the key
properties of log-concave measures is that the measure projection of a
log-concave measure onto a linear subspace is log-concave (this is a
consequence of the Pr\'{e}kopa-Leindler inequality, see \cite{Ba}). In
particular, if $x$ is a random vector in $\mathbb{R}^{d}$ with a log-concave
distribution and $y\in \mathbb{R}^{d}$ is any fixed vector, then $%
\left\langle x,y\right\rangle $ has a log-concave distribution in $\mathbb{R}
$.\ Hence, results on log-concave measures often reduce to the one
dimensional case.

Just as for a convex body, a log-concave probability measure $\mu $ is
called isotropic if its centroid lies at the origin and its covariance
matrix is of the form%
\[
cov(\mu )=L_{\mu }^{2}I_{d}
\]%
Log-concave measures are very rigid. \ A classic example of this rigidity
(see lemma 5.5. in \cite{LV} and p. 1913 in \cite{Bob}) is the following:

\begin{lemma}
For any log-concave probability density function $f$ defined on $\mathbb{R}$
with mean zero and variance one,%
\begin{eqnarray}
1/8 &\leq &f(0)\leq \sup_{t\in \mathbb{R}}f(t)\leq 1  \label{log rigid} \\
\frac{1}{\sqrt{12}} &\leq &f(m)\leq \frac{1}{\sqrt{2}}  \nonumber
\end{eqnarray}%
where $m$ is the median of $f$.
\end{lemma}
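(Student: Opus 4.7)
The plan is to treat the four substantive inequalities separately, each reducing to a comparison of $f$ with a canonical extremal log-concave density (one-sided exponential, Laplace, or uniform) subject to a matching local constraint. The middle chain $f(0)\le\sup_t f(t)$ is trivial, so only the bounds $\sup f\le 1$, $f(m)\le1/\sqrt 2$, $f(m)\ge 1/\sqrt{12}$, and $f(0)\ge 1/8$ require argument, and each follows from a variance comparison after normalization.

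For the upper bounds $\sup_tf(t)\le 1$ and $f(m)\le 1/\sqrt 2$, I would show that among log-concave probability densities on $\mathbb{R}$ with prescribed peak value $M=\sup f$ (respectively prescribed median $m$ and height $\rho=f(m)$), the variance-minimizer is the one-sided exponential $Me^{-M(t-t_0)}\mathbf{1}_{[t_0,\infty)}$ (respectively the symmetric Laplace density of height $\rho$ centered at $m$). These extremizers have variances $1/M^2$ and $1/(2\rho^2)$, so $\mathrm{Var}(f)=1$ forces $M\le 1$ and $\rho\le 1/\sqrt 2$. The extremality comes from a classical log-affine rearrangement: replacing $-\log f$ by its one-sided linear support at the peak, or by its two-sided linear support at the median, preserves convexity, respects the mass and height constraints, and only decreases the second moment.

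For the lower bound $f(m)\ge 1/\sqrt{12}$, I would use the reverse rearrangement, comparing $f$ with the uniform density of height $\rho=f(m)$ on $[m-1/(2\rho),m+1/(2\rho)]$. Among log-concave densities with median $m$ and value $\rho$ there, the uniform maximizes variance (its log is the constant-then-$-\infty$ envelope of $\log f$), and its variance equals $1/(12\rho^2)$; hence $1=\mathrm{Var}(f)\le 1/(12\rho^2)$ gives $\rho\ge 1/\sqrt{12}$. For $f(0)\ge 1/8$, I would invoke Fradelizi's inequality $f(0)\ge e^{-1}\sup_t f(t)$, which holds because $0$ is the centroid of $f$, and combine it with the lower bound $\sup f\ge f(m)\ge 1/\sqrt{12}$ from the previous step. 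The clean combination produces only $f(0)\ge 1/(e\sqrt{12})\approx 0.106$, and a modest numerical sharpening — either by optimizing the Chebyshev radius used to trap the mode near $0$, or by directly writing down the log-affine tangent to $\log f$ at $0$ and comparing to a shifted exponential — is what upgrades this to the stated constant $1/8$.

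The main obstacle will be the rigorous execution of the rearrangement arguments: each replacement must be shown to move the variance in the claimed direction while simultaneously honouring the total mass, the median or centroid constraint, and the value at the distinguished point, which is a balancing act rather than a single monotone substitution. The sharp Fradelizi constant $e^{-1}$ is also unfavourable here, so extracting $1/8$ (rather than the easier $1/(e\sqrt{12})$) is where the most care is needed; both reductions are classical and are carried out in \cite{LV} Lemma 5.5 and \cite{Bob} p.~1913.
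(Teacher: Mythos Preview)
The paper does not prove this lemma; it is quoted as a known rigidity result with references to \cite{LV} Lemma~5.5 and \cite{Bob} p.~1913, and you ultimately defer to the same sources. Your sketch does identify the correct extremal densities (one-sided exponential, Laplace, uniform), which is indeed the content of those references, but the variance-comparison directions are systematically reversed, and as written each step proves the opposite inequality to the one you want.

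Concretely: for fixed peak height $M=\sup f$, the one-sided exponential \emph{maximizes} the variance among log-concave densities (its variance $1/M^{2}$ exceeds, for instance, the uniform's $1/(12M^{2})$), so the correct chain is $\mathrm{Var}(f)\le 1/M^{2}$, whence $M\le 1$. You wrote ``variance-minimizer'', which would yield $\mathrm{Var}(f)\ge 1/M^{2}$ and hence $M\ge 1$. The same reversal occurs in the median bounds: for fixed $\rho=f(m)$ the Laplace density \emph{maximizes} the variance (giving $\mathrm{Var}(f)\le 1/(2\rho^{2})$ and thus $\rho\le 1/\sqrt{2}$), while the uniform density \emph{minimizes} it (giving $\mathrm{Var}(f)\ge 1/(12\rho^{2})$ and thus $\rho\ge 1/\sqrt{12}$); you have both of these the wrong way round. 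The accompanying rearrangement descriptions (``replacing $-\log f$ by its one-sided linear support at the peak \ldots\ only decreases the second moment'', and the uniform as ``the constant-then-$-\infty$ envelope of $\log f$'') inherit the same sign error and, taken literally, do not preserve total mass, so even after swapping min and max a more careful mass-balancing replacement is needed. With those corrections your outline becomes the standard argument in the cited references; your treatment of $f(0)\ge 1/8$ via Fradelizi plus a sharpening is also in the right spirit, and you are right that $1/(e\sqrt{12})$ alone falls short of $1/8$.
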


A direct consequence of the above lemma is that if $\mu $ is an isotropic
log-concave probability measure on $\mathbb{R}^{d}$ with density $f$ and $%
\mathcal{H}$ is any hyperplane containing the origin, then%
\begin{equation}
\frac{1}{8}L_{\mu }^{-1}\leq \int_{\mathcal{H}}f(x)dx\leq L_{\mu }^{-1}
\label{max}
\end{equation}%
In particular, all central sections have roughly the same volume and the
specific hyperplane $\mathcal{H}$ mentioned in the hyperplane conjecture is
actually irrelevant to the problem. It also demonstrates the equivalence
between the hyperplane conjecture and the claim $L_{K}<c$. To see why the
bound (\ref{max}) follows from (\ref{log rigid}), consider the case $L_{\mu
}=1$ and let $X$ be a random vector with distribution $\mu $. For any $%
\theta \in S^{d-1}$ the random variable $\left\langle X,\theta \right\rangle 
$ has mean zero, variance one and a density given by 
\[
f_{\theta }(t)=\int_{\left\langle x,\theta \right\rangle =t}f(x)dx
\]%
Using similar reasoning and a higher dimensional version of (\ref{log rigid}%
), see e.g. lemma 5.14 in \cite{LV}, it is easy to prove that if $\mathcal{H}
$ is any affine subspace of dimension $k$ containing the centroid of $\mu $
then%
\[
c_{1}(n,k)L_{\mu }^{-(n-k)}\leq \int_{\mathcal{H}}f(x)dx\leq
c_{2}(n,k)L_{\mu }^{-(n-k)}
\]%
where $c_{1}(n,k)=2^{-7(n-k)}$ and $c_{2}(n,k)=(n-k)(20(n-k))^{(n-k)/2}$.
This is a slight generalization of a result by Hensley \cite{He} (see also
the discussion in \cite{Ba0} and \cite{Bob2}).

Another rigidity property (see lemma 5.4 in \cite{LV}) is that for any
log-concave probability density function $f$ defined on $\mathbb{R}$ with
mean zero,%
\[
e^{-1}\leq \int_{0}^{\infty }f(x)dx\leq 1-e^{-1}
\]%
Equivalently $F^{-1}(1-e)\geq 0$, where $F$ is the cumulative distribution
corresponding to $f$. Piecing together known results, we easily prove the
following extension:

\begin{lemma}
Let $\rho \in (0,e^{-1})$ and let $\mu $ be an absolutely continuous
log-concave probability measure on $\mathbb{R}$ with mean zero, variance $%
\sigma ^{2}$ and cumulative distribution $F$.\ Then,%
\begin{equation}
(e^{-1}-\rho )\sigma \leq F^{-1}(1-\rho )\leq 10\log (2\rho ^{-1})\sigma
\label{b}
\end{equation}
\end{lemma}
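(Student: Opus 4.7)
The plan is to first reduce to the unit-variance case by scaling: if $X$ has density $f$, then $X/\sigma$ has a log-concave density with mean zero and variance one, and both quantiles and the two sides of the asserted bound scale linearly in $\sigma$. Thus it suffices to prove $e^{-1}-\rho \leq F^{-1}(1-\rho) \leq 10\log(2/\rho)$ when $\sigma=1$.

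For the lower bound I would combine two ingredients already quoted in the preceding discussion: the uniform bound $\sup_{t}f(t)\leq 1$ from Lemma 1, and the mean-zero inequality $\int_{0}^{\infty}f\geq e^{-1}$, equivalently $F(0)\leq 1-e^{-1}$. Writing $t^{\ast}=F^{-1}(1-\rho)$, the second fact gives $t^{\ast}\geq 0$ since $\rho\leq e^{-1}$, and then a single line does it:
\[
 e^{-1}-\rho \;\leq\; (1-\rho)-F(0) \;=\; F(t^{\ast})-F(0) \;=\; \int_{0}^{t^{\ast}} f \;\leq\; t^{\ast}\sup f \;\leq\; t^{\ast}.
\]

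For the upper bound the main step is that the survival function $G(t)=1-F(t)$ is itself log-concave. This follows from Pr\'ekopa--Leindler applied to the log-concave function $(x,t)\mapsto f(x)\mathbf{1}_{\{x>t\}}$, whose $x$-marginal equals $G(t)$. Consequently $-\log G$ is convex and non-decreasing; since $G(m)=1/2$ while $(-\log G)^{\prime}(m)=f(m)/G(m)=2f(m)\geq 2/\sqrt{12}=1/\sqrt{3}$ by Lemma 1, the tangent inequality at $m$ gives
\[
 G(t) \;\leq\; \tfrac{1}{2}\exp\!\bigl(-(t-m)/\sqrt{3}\bigr),\qquad t\geq m.
\]
Solving $G(t)=\rho$ yields $F^{-1}(1-\rho)\leq m+\sqrt{3}\log(1/(2\rho))$. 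A one-line application of Chebyshev ($\mathbb{P}(|X|\geq 2)\leq 1/4<1/2$) bounds the median by $|m|\leq 2$, and since $\rho<e^{-1}$ forces $\log(2/\rho)>1$, one concludes
\[
 F^{-1}(1-\rho) \;\leq\; 2+\sqrt{3}\log(2/\rho) \;<\; (2+\sqrt{3})\log(2/\rho) \;<\; 10\log(2/\rho),
\]
with substantial slack (hence the innocuous constant $10$).

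The whole argument is essentially an assembly of the one-dimensional rigidity results already quoted; there is no serious obstacle. The only point requiring care is the log-concavity of $G$, which I would state explicitly (with the Pr\'ekopa--Leindler justification) since it is the mechanism producing the exponential tail bound and thereby the logarithmic dependence on $\rho$ in the upper estimate.
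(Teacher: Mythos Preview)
Your proof is correct. The reduction to $\sigma=1$ and the lower-bound argument are essentially identical to the paper's: both combine $\sup f\le 1$ with the fact that at least $e^{-1}$ of the mass lies to the right of the mean, then integrate over $[0,t^{\ast}]$.

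For the upper bound the paper takes a shorter route: it simply quotes a known exponential tail estimate for isotropic log-concave densities (Lemma~2.2 of Klartag), namely $1-F(t)\le 2e^{-t/10}$ for $t\ge 0$, and inverts it directly. Your argument instead proves such a bound from scratch: you invoke Pr\'ekopa--Leindler to show the survival function $G=1-F$ is log-concave, then linearize $-\log G$ at the median using $f(m)\ge 1/\sqrt{12}$ from Lemma~1, and finally bound $|m|$ via Chebyshev. This is more self-contained and yields a visibly better constant ($2+\sqrt{3}$ versus $10$), at the cost of a few extra lines; the paper's version is terser but relies on an external citation. Both routes rest on the same mechanism---exponential tail decay forced by log-concavity---so the difference is one of presentation rather than substance.
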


\begin{proof}
We can assume without loss of generality that $\sigma =1$. By the result
cited above (lemma 5.4 in \cite{LV}), $F^{-1}(1-\rho )>0$.\ Let $x$ be a
random variable with distribution $\mu $ and density $f$.\ Lemma 5.5 in \cite%
{LV} states that $f(0)\geq 1/8$ and for all $t\in \mathbb{R}$, $f(t)\leq 1$.
\ Hence (again by lemma 5.4 in \cite{LV})%
\begin{eqnarray*}
e^{-1}-\rho &\leq &\mu \{y\in \mathbb{R}:0\leq y\leq F^{-1}(1-\rho )\} \\
&=&\int_{0}^{F^{-1}(1-\rho )}f(s)ds \\
&\leq &F^{-1}(1-\rho )
\end{eqnarray*}%
Since $g=-\log f$ is convex, $f=e^{-g}$ decays exponentially (or quicker).
In particular (see e.g. lemma 2.2. in \cite{Kl}) for all $t\geq 0$,%
\[
1-F(t)\leq 2e^{-t/10}
\]%
from which the upper bound follows.
\end{proof}

This easily transfers to the multidimensional setting.\ For any half-space $%
\mathfrak{H}$ containing the centroid of a log-concave probability measure $%
\mu $, $\mu (\mathfrak{H})\geq e^{-1}$ (see lemma 5.12 in \cite{LV}).
Equivalently, any half-space of mass less than $e^{-1}$ can not contain the
centroid.\ The above lemma can be re-cast as follows.

\begin{lemma}
Let $\rho \in (0,e^{-1})$ and let $\mu $ be an isotropic log-concave
probability measure on $\mathbb{R}^{d}$.\ If $\mathfrak{H}_{\theta
,t}=\{x\in \mathbb{R}^{d}:\left\langle \theta ,x\right\rangle \geq t\}$ is
any half-space with $\mu (\mathfrak{H}_{\theta ,t})=\rho $ then%
\begin{equation}
(e^{-1}-\rho )L_{\mu }\leq t\leq 10\log (2\rho ^{-1})L_{\mu }  \label{a}
\end{equation}
\end{lemma}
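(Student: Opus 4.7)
The plan is to reduce Lemma 3 directly to Lemma 2 by passing to the one-dimensional marginal $\langle X,\theta\rangle$ where $X\sim\mu$. First I would verify that this marginal satisfies the hypotheses of Lemma 2: log-concavity, zero mean, and the right variance.

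For log-concavity, I would invoke the fact already recalled in the introduction (a consequence of the Prékopa–Leindler inequality) that linear marginals of log-concave measures on $\mathbb{R}^d$ are log-concave on $\mathbb{R}$. Since $\mu$ is isotropic, the centroid condition gives $\mathbb{E}\langle X,\theta\rangle = \langle \mathbb{E} X,\theta\rangle = 0$, and the covariance condition $\mathrm{cov}(X)=L_\mu^2 I_d$ yields
\[
\mathrm{Var}\langle X,\theta\rangle = \theta^{T}\mathrm{cov}(X)\theta = L_\mu^{2}\|\theta\|_2^{2} = L_\mu^{2},
\]
so the standard deviation of the marginal is exactly $\sigma = L_\mu$.

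Next I would translate the hypothesis $\mu(\mathfrak{H}_{\theta,t})=\rho$ into a quantile statement. Letting $F$ denote the cumulative distribution function of $\langle X,\theta\rangle$, we have
\[
1 - F(t) = \Pr(\langle X,\theta\rangle \geq t) = \mu(\mathfrak{H}_{\theta,t}) = \rho,
\]
so $t = F^{-1}(1-\rho)$. Since $\rho \in (0,e^{-1})$, Lemma 2 applies to this one-dimensional marginal with $\sigma=L_\mu$ and yields exactly
\[
(e^{-1}-\rho)L_\mu \leq t \leq 10\log(2\rho^{-1})L_\mu,
\]
which is the desired conclusion. There is no real obstacle here; the only thing to be careful about is that the passage to the marginal preserves all three hypotheses (log-concavity, mean zero, variance $L_\mu^2$), and the identification of $t$ with the $(1-\rho)$-quantile is immediate from the definition of the half-space. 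In effect Lemma 3 is a direct translation of Lemma 2 into the language of half-spaces of an isotropic log-concave measure.
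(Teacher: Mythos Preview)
Your proposal is correct and is exactly the argument the paper has in mind: the paper introduces Lemma~3 with the phrase ``the above lemma can be re-cast as follows,'' and your reduction to the one-dimensional marginal $\langle X,\theta\rangle$ (log-concave by Pr\'ekopa--Leindler, mean zero and variance $L_\mu^2$ by isotropy, with $t=F^{-1}(1-\rho)$) is precisely that recasting made explicit.
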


\begin{proof}[Proof of theorem 1]
Denote the hyperplane conjecture as $\Lambda _{\mathcal{H}}$. Clearly $%
\Lambda _{2}\Rightarrow \Lambda _{1}$. We now demonstrate that $\Lambda
_{1}\Rightarrow \Lambda _{\mathcal{H}}\Rightarrow \Lambda _{2}$.

Suppose that $\Lambda _{1}$ is true and let $\delta ^{\prime }=(\delta
+e^{-1})/2$. Consider any $d\in \mathbb{N}$ and any isotropic convex body $K$
of unit volume.\ By (\ref{a}), if $\mathfrak{H}_{\theta ,t}=\{x\in \mathbb{R}%
^{d}:\left\langle \theta ,x\right\rangle \geq t\}$ is any half-space with $%
\mathrm{vol}_{d}(\mathfrak{H}_{\theta ,t}\cap K)=\delta $, then $t\geq
L_{\mu }(e^{-1}-\delta )$. \ Hence $\mathfrak{H}_{\theta ,t}\cap B(0,L_{\mu
}(e^{-1}-\delta ^{\prime }))=\emptyset $.\ Since this holds for any such
hyperplane, $B(0,L_{\mu }(e^{-1}-\delta ^{\prime }))\subset K_{\delta }$. \
By $\Lambda _{1}$, $K_{\delta }\subset B(0,r)$. \ Hence $L_{\mu }\leq
r(e^{-1}-\delta ^{\prime })^{-1}$. \ Since this holds for any isotropic
convex body in any dimension, this implies the truth of $\Lambda _{\mathcal{H%
}}$.

Suppose that $\Lambda _{\mathcal{H}}$ is true. In particular, $L_{K}<c$ for
any convex body $K$. Consider any $\delta \in (0,e^{-1})$ and let $r=10\log
(2\delta ^{-1})c$. Let $K$ be an isotropic convex body of unit volume in $%
\mathbb{R}^{d}$ and let $\mathfrak{H}_{\theta ,t}=\{x\in \mathbb{R}%
^{d}:\left\langle \theta ,x\right\rangle \geq t\}$ denote any half-space
with $\mathrm{vol}_{d}(\mathfrak{H}_{\theta ,t}\cap K)=\delta $. By (\ref{a}%
), $t\leq r$. For any $\theta \in S^{n-1}$ such a half-space exists, hence $%
K_{\delta }\subset B(0,r)$ and $\Lambda _{2}$ holds true.
\end{proof}

\bigskip

\section{Small perturbations}

\begin{proof}[Proof of theorem 2]
We may assume without loss of generality that $K$ is isotropic.\ In
particular, the center of mass of $K$ is zero.\ Fix any $\theta \in S^{d-1}$
and $t\in \lbrack m_{\theta },h_{K}(\theta )]$.\ Brunn's theorem \cite{Ba}
claims that the function $s\mapsto \psi _{\theta }(s)^{1/(d-1)}$ is concave
on its support. For any $s\in (t,h_{K}(\theta ))$, we have the convex
combination%
\[
s=\frac{h_{K}(\theta )-s}{h_{K}(\theta )-t}t+\frac{s-t}{h_{K}(\theta )-t}%
h_{K}(\theta )
\]%
Since $\psi _{\theta }\geq 0$, it follows by concavity that%
\[
\psi _{\theta }(s)\geq \left( \frac{h_{K}(\theta )-s}{h_{K}(\theta )-t}%
\right) ^{d-1}\psi _{\theta }(t)
\]%
and by integration that%
\begin{equation}
A_{\theta }(t)\geq \alpha _{\theta }(t)\psi _{\theta }(t)  \label{primary}
\end{equation}%
where $\alpha _{\theta }(t)=d^{-1}(h_{K}(\theta )-t)>0$. However, for any $%
s\in (m_{\theta },t)$ we have the convex combination%
\[
t=\frac{h_{K}(\theta )-t}{h_{K}(\theta )-s}s+\frac{t-s}{h_{K}(\theta )-s}%
h_{K}(\theta )
\]%
We again have by concavity that%
\[
\psi _{\theta }(t)\geq \left( \frac{h_{K}(\theta )-t}{h_{K}(\theta )-s}%
\right) ^{d-1}\psi _{\theta }(s)
\]%
hence%
\begin{eqnarray*}
1/2-A_{\theta }(t) &=&\int_{m_{\theta }}^{t}\psi _{\theta }(s)ds \\
&\leq &\beta _{\theta }(t)\psi _{\theta }(t)
\end{eqnarray*}%
where $\beta _{\theta }(t)=d^{-1}[h_{K}(\theta )-t]^{1-d}\left(
[h_{K}(\theta )-m_{\theta })]^{d}-[h_{K}(\theta )-t]^{d}]\right) >0$. Thus,%
\begin{equation}
A_{\theta }(t)\geq 1/2-\beta _{\theta }(t)\psi (t)  \label{secondary}
\end{equation}%
\ For large values of $\psi _{\theta }(t)$, (\ref{primary}) is a better
bound, while for small values of $\psi _{\theta }(t)$ (\ref{secondary}) is
better. Minimizing the function $u\mapsto \max \{\alpha u;1/2-\beta u\}$
gives%
\begin{eqnarray}
A_{\theta }(t) &\geq &\frac{1}{2}\alpha _{\theta }(t)[\alpha _{\theta
}(t)+\beta _{\theta }(t)]^{-1}  \nonumber \\
&=&\frac{1}{2}\left( \frac{h_{K}(\theta )-t}{h_{K}(\theta )-m_{\theta }}%
\right) ^{d}  \label{tail bound}
\end{eqnarray}%
Let $t_{\theta }=A_{\theta }^{-1}(\delta )$. Note that $\mathrm{vol}%
_{d}(K\cap \mathfrak{H}_{\theta ,t_{\theta }})=\delta $ and that $%
h_{K}(\theta )-m_{\theta }\leq 2h_{K}(\theta )$. \ By (\ref{tail bound}),%
\begin{eqnarray*}
t_{\theta } &\geq &h_{K}(\theta )-(2\delta )^{1/d}(h_{K}(\theta )-m_{\theta
}) \\
&\geq &h_{K}(\theta )(1-4\delta ^{1/d})
\end{eqnarray*}%
This implies that $\mathfrak{H}_{\theta ,t_{\theta }}\cap (1-4\delta
^{1/d})K=\emptyset $. Since this holds for all $\theta \in S^{d-1}$, we have
the inclusion $(1-4\delta ^{1/d})K\subset K_{\delta }$, and the bounds (\ref%
{geometric}) and (\ref{Banach-Mazur}) hold.
\end{proof}

Inequality (\ref{geometric}) is essentially sharp. To see this, consider the
simplex in standard orthogonal position,%
\[
\Delta _{c}^{d}=\{x\in \mathbb{R}^{d}:\forall i\text{ }x_{i}\geq
0,\sum_{i=1}^{d}x_{i}\leq 1\}
\]%
and the half-space $\mathfrak{H}=\{x\in \mathbb{R}^{d}:x_{1}\leq 1-\delta
^{1/d}\}$. Since $\Delta _{c}^{d}\backslash \mathfrak{H}$ is homothetic to $%
\Delta _{c}^{d}$ by a factor of $\delta ^{1/d}$, $\mathrm{vol}_{d}(\mathfrak{%
H}\cap \Delta _{c}^{d})=(1-\delta )\mathrm{vol}_{d}(\Delta _{c}^{d})$. \
Hence, if $K=\Delta _{c}^{d}$, then $K_{\delta }\subset \mathfrak{H}$ and%
\begin{equation}
d_{\mathfrak{L}}(K,K_{\delta })\geq 1+\frac{1}{2}\delta ^{1/d}
\label{simplex bound}
\end{equation}

\bigskip

\section{Large perturbations}

In high dimensional spaces, the mass inside an isotropic convex body $K$ is
roughly normally distributed in most directions. The measure of 'most
directions' is Haar measure on $S^{d-1}$. \ For the Euclidean ball this is
referred to as Maxwell's principal, while the general case was proved by
Klartag \cite{Kl}. A consequence of this is that most of the one dimensional
marginals of the uniform probability measure on $K$ have tails that are very
long and very light. Even to cut off a small proportion of the mass of $K$
in a fixed direction requires us to go very deeply inside the body. Thus the
body $K_{\delta }$ may be much smaller than the original body $K$. \ 

Indeed, as was pointed out to myself by Professor Mathieu Meyer, for fixed $%
\delta >0$ the effect of the operation $K\mapsto K_{\delta }$ is,
asymptotically in dimension, worst possible. By a result in \cite{MP}, for
all $\delta >0$ there exists a universal constant $C>1$ such that for any $%
d\in \mathbb{N}$ and any centrally symmetric convex body $K\subset \mathbb{R}%
^{d}$, $d_{BM}(K_{\delta },B_{2}^{d})\leq C$. Hence, there will always be
some symmetric convex body $K$ (in particular the cube $B_{\infty }^{d}$ and
the cross polytope $B_{1}^{d}$) such that $d_{BM}(K,K_{\delta })\approx 
\sqrt{d}$. \ Both the size and the shape of $K$ are dramatically altered.

\begin{proof}[Proof of theorem 3]
The relationship between $K$ and $K_{\delta }$ is independent of Euclidean
structure and we can therefore assume that $K$ is in isotropic position.
Define $v_{d}=\mathrm{vol}_{d}(B_{2}^{d})=\pi ^{d/2}(\Gamma (\frac{d}{2}%
+1))^{-1}$. Hence the body $v_{d}^{-1/d}B_{2}^{d}$ has unit volume. By
Stirling's formula, $v_{d}^{-1/d}\geq c^{\prime }\sqrt{d}$. Since $\mathrm{%
vol}_{d}(K)=1$, it can not be a proper subset of $v_{d}^{-1/d}B_{2}^{d}$,
and there exists $y\in K$ with $||y||_{2}\geq c^{\prime }\sqrt{d}$. As in
the proof of theorem 1, we have $K_{\delta }\subset 10L_{K}\log (2\delta
^{-1})B_{2}^{d}\subset cd^{1/4}\log (2\delta ^{-1})B_{2}^{d}$. Inequality (%
\ref{lower bound 2}) then follows. \ 
\end{proof}

\end{document}